\DeclareMathOperator{\denom}{denom}
\DeclareMathOperator{\ord}{ord}
\newcommand{\pdiv}{\mid}
\newcommand{\pdiveq}{\mid\mid}
\newcommand{\notdiv}{\nmid}
\newcommand{\NN}{\mathbb{N}}
\newtheorem{theorem}{Theorem}
\newtheorem{prop}{Proposition}
\newtheorem{lemma}{Lemma}
\newtheorem{corl}{Corollary}
\newtheorem{conj}{Conjecture}
\theoremstyle{remark}
\newtheorem{remark}{Remark}
\title[On stronger conjectures that imply the Erd\H{o}s-Moser conjecture]
{On stronger conjectures that imply\\ the Erd\H{o}s-Moser conjecture}
\author{Bernd C. Kellner}
\date{}
\subjclass[2010]{11B83 (Primary) 11A05, 11B68 (Secondary)}
\address{Mathematisches Institut, Universit\"at G\"ottingen, Bunsenstr.\ 3–-5,
37073 G\"ottingen, Germany}
\email{bk@bernoulli.org}
\keywords{Erd\H{o}s-Moser equation, consecutive values of polynomials}
\begin{document}

\begin{abstract}
The Erd\H{o}s-Moser conjecture states that the Diophantine equation
$S_k(m) = m^k$, where $S_k(m)=1^k+2^k+\cdots+(m-1)^k$, has no solution for
positive integers $k$ and $m$ with $k \geq 2$. We show that stronger
conjectures about consecutive values of the function $S_k$, that seem to be
more naturally, imply the Erd\H{o}s-Moser conjecture.
\end{abstract}

\maketitle

\section{Introduction}

Let $k$ and $m$ be positive integers throughout this paper. Define
\[
   S_k(m) = 1^k + 2^k + \cdots + (m-1)^k.
\]

\begin{conj}[Erd\H{o}s-Moser] \label{conj:em}
The Diophantine equation
\begin{equation} \label{eq:em}
   S_k(m) = m^k
\end{equation}
has only the trivial solution $(k,m) = (1,3)$ for positive integers $k$, $m$.
\end{conj}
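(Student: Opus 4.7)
The plan is to convert the Diophantine equation $S_k(m) = m^k$ into a problem about Bernoulli numbers and then try to extract divisibility obstructions. First, I would use Faulhaber's closed form
\[
  S_k(m) = \frac{1}{k+1}\sum_{j=0}^{k}\binom{k+1}{j} B_j\, m^{k+1-j},
\]
so that the hypothetical equation $S_k(m) = m^k$ becomes an explicit polynomial identity in $m$ whose coefficients involve the Bernoulli numbers $B_j$. Reducing this identity modulo any prime $p$ dividing one of $m-1$, $m$, $m+1$, or $2m-1$ collapses most terms on the right-hand side and yields strong numerical constraints linking $p$ with $k$; this is the route taken in Moser's original treatment.

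Second, I would invoke the von Staudt--Clausen theorem to control the denominators of the $B_j$: a prime $p$ divides $\denom(B_j)$ precisely when $(p-1)\pdiv j$ (for even $j\geq 2$). Combined with the previous step, each prime $p$ with $(p-1)\pdiv k$ contributes a divisibility condition on one of $m-1$, $m$, $m+1$, $2m-1$. Sieving over small primes then forces $k$ to be highly even and $m$ astronomically large (the current record is of the order $m > 10^{10^9}$).

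The hard part — and the point at which no elementary argument has ever succeeded — is turning these ever-growing lower bounds into an outright contradiction. Because the sieve only excludes finitely many congruence classes at a time, elementary techniques cannot by themselves close the gap, and none has succeeded in more than half a century of attempts. This is exactly the motivation of the present paper: instead of attacking the isolated equation $S_k(m)=m^k$, one replaces Conjecture \ref{conj:em} by a more uniform statement about \emph{consecutive} values $S_k(m), S_k(m+1), \ldots$ of the polynomial $S_k$, in the hope that such a statement is structurally more natural and hence more tractable, and then deduces Conjecture \ref{conj:em} as a corollary. My own proof plan would therefore split into two halves: (i)~formulate the cleanest conjecture about two or three consecutive values of $S_k$ that captures the ``coincidence'' demanded by $S_k(m) = m^k$, namely the identity $S_k(m+1)=2S_k(m)$, and (ii)~verify that this uniform conjecture rules out the isolated coincidence. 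Step~(i) is the main obstacle, since one has to choose the auxiliary conjecture delicately enough that it is both believable and strictly stronger than Erd\H{o}s--Moser.
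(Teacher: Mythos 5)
You have correctly recognized the essential point: the statement is Conjecture \ref{conj:em} itself, an open problem, and the paper contains no proof of it --- nor does your proposal claim one. What the paper actually does is precisely your step~(ii), carried out twice: it rewrites $S_k(m)=m^k$ as $2S_k(m)=S_k(m+1)$ (the identity you single out in step~(i)) and deduces Conjecture \ref{conj:em} from the stronger Conjecture \ref{conj:Sk-div} about integrality of the ratio $S_k(m+1)/S_k(m)$; and, independently, it deduces Conjecture \ref{conj:em} from Conjecture \ref{conj:Sk-gcd} about $\gcd(S_k(m),S_k(m+1))$, via Theorem \ref{thm:Sk-gcd} and the bound $m \leq \sqrt[k-1]{|N_k|} < \frac{2}{\pi}\,k$, which contradicts the elementary estimate $k<m$ of \eqref{eq:estim-k-m}. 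Your sketch of the Moser-style sieve (Faulhaber's formula, reduction modulo primes dividing $m-1$, $m$, $m+1$, $2m-1$, von Staudt--Clausen to control denominators) accurately describes where the classical lower bounds such as $m>10^{10^9}$ come from, but that material is background cited by the paper, not part of its argument.

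The one point to state more sharply: your step~(i) --- ``formulate the cleanest conjecture about consecutive values delicately enough'' --- is not a step that any proof can complete; it is the replacement of one open problem by another. Conjectures \ref{conj:Sk-div} and \ref{conj:Sk-gcd} are themselves unproven, supported in the paper only by computer search and by verification of \ref{conj:Sk-gcd} for $2 \leq k \leq 48$ and some higher indices. So your proposal matches the paper's strategy faithfully and correctly identifies the pivotal rewriting $S_k(m+1)=2S_k(m)$, but, exactly like the paper, it yields only conditional implications; no unconditional proof of the statement exists here to compare against, and you were right not to manufacture one.
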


In 1953 Moser \cite{Moser:1953} showed that if a solution of \eqref{eq:em}
exists for $k \geq 2$, then $k$ must be even and $m > 10^{10^6}$.
Recently, this bound has been greatly increased to $m > 10^{10^9}$
by Gallot, Moree, and Zudilin \cite{GMZ:2011}. So it is widely believed that
non-trivial solutions do not exist.
Comparing $S_k$ with the integral $\int\! x^k dx$, see \cite{GMZ:2011}, one gets
an easy estimate that
\begin{equation} \label{eq:estim-k-m}
   k < m < 2k.
\end{equation}

A general result of the author \cite[Prop.~8.5, p.~436]{Kellner:2007} states that
\begin{equation} \label{eq:m2-div-Bk}
   m^{r+1} \pdiv S_k(m) \quad \iff \quad m^r \pdiv B_k
\end{equation}
for $r=1,2$ and even $k$, where $B_k$ denotes the $k$-th Bernoulli number.
Thus a non-trivial solution $(k,m)$ of \eqref{eq:em} has the property
that $m^2$ must divide the numerator of $B_k$ for $k \geq 4$;
this result concerning \eqref{eq:em} was also shown in \cite{MRU:1992} in a
different form.

Because the Erd\H{o}s-Moser equation is very special, one can consider
properties of consecutive values of the function $S_k$ in general.
This leads to two stronger conjectures, described in the next sections,
that imply the conjecture of Erd\H{o}s-Moser.

\section{Preliminaries}

We use the following notation. We write $p^r \pdiveq m$ when $p^r \pdiv m$
but $p^{r+1} \notdiv m$, i.e., $r = \ord_p m$ where $p$ always denotes a prime.
Next we recall some properties of the Bernoulli numbers and the function $S_k$.

The Bernoulli numbers $B_n$ are defined by
\[
  \frac{z}{e^z-1} = \sum_{n=0}^\infty B_n \frac{z^n}{n!},
    \quad |z| < 2 \pi.
\]
These numbers are rational where $B_n = 0$ for odd $n > 1$ and
$(-1)^{\frac{n}{2}+1}B_n > 0$ for even $n > 0$. A table of the Bernoulli numbers
up to index 20 are given in \cite[p.~437]{Kellner:2007}.
The denominator of $B_n$ for even $n$ is described by the von Staudt-Clausen
theorem, see \cite[p.~233]{IR:1990}, that
\begin{equation} \label{eq:Bk-denom}
   \denom(B_n) = \prod_{p-1 \pdiv n} p.
\end{equation}

The function $S_k$ is closely related to the Bernoulli numbers
and is given by the well-known formula, cf. \cite[p.~234]{IR:1990}:
\begin{equation} \label{eq:Sk-sum}
  S_k(m) = \sum_{\nu=0}^k \binom{k}{\nu} B_{k-\nu} \frac{m^{\nu+1}}{\nu+1}.
\end{equation}

\section{Stronger conjecture --- Part I}

The strictly increasing function $S_k$ is a polynomial of
degree $k+1$ as a result of \eqref{eq:Sk-sum}. One may not expect that
consecutive values of $S_k$ have highly common prime factors, such that
$S_k(m+1)/S_k(m)$ is an integer for sufficiently large $m$.

\begin{conj} \label{conj:Sk-div}
Let $k, m$ be positive integers with $m \geq 3$. Then
\begin{equation} \label{eq:conj-Sk-div}
   \frac{S_k(m+1)}{S_k(m)} \in \NN
     \quad \iff \quad (k,m) \in \{ (1,3), (3,3) \}.
\end{equation}
\end{conj}

Note that we have to require $m \geq 3$, since $S_k(1) = 0$ and $S_k(2) = 1$
for all $k \geq 1$. Due to the well-known identity $S_1(m)^2 = S_3(m)$,
a solution for $k=1$ implies a solution for $k=3$. Hereby we have the only
known solutions
\begin{equation} \label{eq:Sk-div-sol}
   \frac{1+2+3}{1+2} = 2 \quad \text{and} \quad \frac{1^3+2^3+3^3}{1^3+2^3} = 4
\end{equation}
based on some computer search. Since $S_k(m+1)/S_k(m) \to 1$ as $m \to \infty$,
it is clear that we can only have a finite number of solutions for a fixed $k$.
By $S_k(m+1) = S_k(m) + m^k$, one easily observes that \eqref{eq:conj-Sk-div}
is equivalent to
\[
   a \, S_k(m) = m^k \quad \iff \quad (a,k,m) \in \{ (1,1,3), (3,3,3) \},
\]
where $a$ is a positive integer. This gives a generalization of \eqref{eq:em}.

\begin{prop}
Conjecture \ref{conj:Sk-div} implies Conjecture \ref{conj:em}.
\end{prop}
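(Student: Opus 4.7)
The plan is to assume, for contradiction, that \eqref{eq:em} admits a non-trivial solution $(k,m)$ with $k\geq 2$, and to pin down $g_k(m)$ at this solution so tightly that it violates the ``$\min\cdot\max=|B_k|$'' pairing predicted by Conjecture \ref{conj:Sk-gcd}. By Moser's theorem recalled in the introduction, $k$ must be even, hence $k\geq 4$, and by \eqref{eq:estim-k-m} also $m>k$ (so $m\geq k+1$).

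The first step is to evaluate $g_k(m)$ at the hypothetical solution. The recurrence $S_k(m+1)=S_k(m)+m^k$ combined with $S_k(m)=m^k$ gives $S_k(m+1)=2m^k$, so
\[
\gcd\!\bigl(S_k(m),S_k(m+1)\bigr)=\gcd(m^k,2m^k)=m^k,
\]
and therefore $g_k(m)=m^{k-1}$.

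The second step is to turn the conjecture into an upper bound for $g_k(m)$. Unconditionally, Theorem \ref{thm:Sk-gcd} gives $\min_{m\geq 2}g_k(m)=1/D_k$, so the conjectured identity $\min\cdot\max=|B_k|=|N_k|/D_k$ forces $\max_{m\geq 2}g_k(m)=|N_k|$. Hence $m^{k-1}=g_k(m)\leq|N_k|$.

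The third step is to derive a contradiction from $m^{k-1}\leq|N_k|$, using only that $k$ is even with $k\geq 4$ and $m\geq k+1$. Writing $|N_k|=D_k|B_k|$, I combine the Euler formula $|B_k|=2\zeta(k)k!/(2\pi)^k<4\cdot k!/(2\pi)^k$ with the Chebyshev-type bound $D_k\leq\prod_{p\leq k+1}p<4^{k+1}$ (justified by \eqref{eq:Bk-denom}, which forces every prime dividing $D_k$ to satisfy $p\leq k+1$), yielding $|N_k|<16\,k!\,(2/\pi)^k$. A direct application of Stirling then shows $(k+1)^{k-1}>|N_k|$ for all $k$ past some small explicit threshold, while the finitely many remaining indices---in particular $k\in\{4,6,8\}$ where $|N_k|=1$---are trivial. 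This contradicts $m^{k-1}\leq|N_k|$.

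The only real obstacle is the growth-rate comparison in this last step; the ratio $m^{k-1}/|N_k|$ is essentially a fixed constant strictly greater than $1$ raised to the $k$-th power (up to polynomial factors in $k$), so the contradiction is clean once the two standard bounds are in place. Everything else is bookkeeping from Theorem \ref{thm:Sk-gcd}, and an alternative route that entirely avoids the small-$k$ case-check would simply invoke the known lower bound $m>10^{10^9}$ from \cite{GMZ:2009}, which already places us comfortably in the asymptotic regime.
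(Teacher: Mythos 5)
Your proposal is correct and follows essentially the same route as the paper: Theorem \ref{thm:Sk-gcd} converts Conjecture \ref{conj:Sk-gcd} into $\max_{m \geq 2} g_k(m) = |N_k|$, a hypothetical solution of \eqref{eq:em} forces $g_k(m) = m^{k-1} \leq |N_k|$, and Euler's formula plus an exponential bound on $D_k$ and a factorial estimate then contradicts the requirement $k < m$ from \eqref{eq:estim-k-m}. The only deviations are cosmetic and harmless---you bound $D_k < 4^{k+1}$ via the primorial where the paper uses the Chowla--Hartung divisibility $D_k \pdiv 2(2^k-1)$ to get $D_k < 2^{k+1}$, you compare $(k+1)^{k-1}$ against $|N_k|$ instead of extracting $(k-1)$-th roots as the paper does, and your remark ``$k$ even, hence $k \geq 4$'' overlooks $k=2$, which your own trivial-case argument ($|N_2|=1$, so $m^{k-1} \leq 1$) disposes of anyway, just as the paper handles $k \in \{2,4,6,8\}$ via $\max_{m \geq 2} g_k(m) = 1$.
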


\begin{proof}
Eq.~\eqref{conj:em} can be rewritten as $2S_k(m)=S_k(m+1)$ after adding
$S_k(m)$ on both sides. Conjecture \ref{conj:Sk-div} states that $S_k(m+1)/S_k(m)$
is not a positive integer except for the cases $(k,m) = (1,3)$ and $(k,m) = (3,3)$
as given in \eqref{eq:Sk-div-sol}. This implies Conjecture \ref{conj:em},
which predicts $S_k(m+1)/S_k(m) \neq 2$ for $k \geq 2$.
\end{proof}

\section{Stronger conjecture --- Part II}

The connection between the function $S_k$ and the Bernoulli numbers leads to
the following theorem, which we will prove later. In the following we always
write $B_k = N_k/D_k$ in lowest terms with $D_k > 0$ for even $k$.
For now we write $(a,b)$ for $\gcd(a,b)$.

\begin{theorem} \label{thm:Sk-gcd}
Let $k, m$ be positive integers with even $k$. Define
\[
   g_k(m) = \frac{(S_k(m),S_k(m+1))}{m}.
\]
Then
\[
   \min_{m \, \geq \, 1} \, g_k(m) = \frac{1}{D_k}
     \quad \text{and} \quad
     \max_{m \, \geq \, 1} \, g_k(m) \geq |N_k|.
\]
Generally
\[
   g_k(m)=1 \quad \iff \quad (D_k N_k,m)=1
\]
and special values are given by
\[
   g_k(D_k) = \frac{1}{D_k}, \quad g_k(|N_k|) = |N_k|,
     \quad \text{and} \quad g_k(D_k \, |N_k|) = |B_k|.
\]
More generally,
\[
   g_k(m) = |N_k|, \quad \text{if\ } (D_k,m)=1
     \text{\ and\ } |N_k| \pdiv m.
\]
In particular if $N_k$ is square-free, then
\[
   g_k(m)=\frac{(N_k,m)}{(D_k,m)}
     \quad \text{and} \quad
     \max_{m \, \geq \, 1} \, g_k(m) = |N_k|.
\]
\end{theorem}

\begin{remark} \label{rem:Nk-prime}
It is well-known that $|N_k| = 1$ exactly for $k \in \{2,4,6,8\}$. Known
indices $k$, where $|N_k|$ is prime, are recorded as sequence A092132 in
\cite{Sloane:2011}: $10, 12, 14, 16, 18, 36, 42$. Sequence A090997 in
\cite{Sloane:2011} gives the indices $k$, where $N_k$ is not square-free:
50, 98, 150, 196, 228, $\ldots$ . By this, all $N_k$ are square-free for
$2 \leq k \leq 48$.
\end{remark}

Since $S_k(m+1) = S_k(m)+m^k$, we have
\begin{equation} \label{eq:loc-gcd-Sk}
   (S_k(m),S_k(m+1)) = (S_k(m),m^k),
\end{equation}
giving a connection with \eqref{eq:em}.
The function $g_k$ heavily depends on the Bernoulli number $B_k$.
For $2 \leq k \leq 48$ and some higher indices $k$ we even have
\[
   \min_{m \, \geq \, 1} \, g_k(m) \, \cdot \,
   \max_{m \, \geq \, 1} \, g_k(m) = |B_k|.
\]

The problem is to find an accurate upper bound of $g_k$ to solve \eqref{eq:em}.
This relation is demonstrated by Theorem~\ref{thm:gk-bound} below and we raise
the following conjecture based on Theorem~\ref{thm:Sk-gcd} and some computations.

\begin{conj} \label{conj:gk-max}
The function $g_k$ has an upper bound as given in Theorem~\ref{thm:gk-bound}.
\end{conj}

\begin{theorem} \label{thm:gk-bound}
Let $k, m, r$ be positive integers with even $k \geq 10$. If
\[
   \max_{m \, \geq \, 1} \, g_k(m) < |N_k| \log^r |N_k|
     \quad \text{for } k \geq C_r
\]
and \eqref{eq:em} has no solution for $k < C_r$, where $C_r$ is an
effectively computable constant, then Conjecture \ref{conj:em} is true.
In particular, one can choose $C_r=10$ for $r=1,\ldots,6$.
\end{theorem}

\begin{proof}
Considering Theorem~\ref{thm:Sk-gcd} and \eqref{eq:loc-gcd-Sk},
a possible solution of \eqref{eq:em} must trivially satisfy
\begin{equation} \label{eq:loc-mk-gcd}
   m^k = (S_k(m),m^k) = m \, g_k(m).
\end{equation}
For $k=2,4,6,8$ there is no solution of \eqref{eq:em}, since $|N_k|=1$.
Now let $k \geq 10$.
Using the relation of $B_k$ to the Riemann zeta function by Euler's formula,
cf. \cite[p.~231]{IR:1990}, we have
\[
   |B_k| = 2 \zeta(k) \frac{k!}{(2\pi)^k}.
\]
Since $\zeta(s) \to 1$ monotonically as $s \to \infty$ and
$\zeta(2)=\pi^2/6$, we obtain
\[
   |N_k| < \frac{\pi^2}{3} \frac{k!}{(2\pi)^k} D_k <
     \frac{2\pi^2}{3} \frac{k!}{\pi^k},
\]
using the fact that $D_k \pdiv 2(2^k-1)$, see \cite{CH:1972}.
Stirling's series of the Gamma function, cf. \cite[p.~481]{GKP:1994},
states that $k! < \sqrt{2\pi k} \, k^k \, e^{-k+1/12k}.$
Since $e^{1/12k} < \frac{11}{10}$, we deduce that
\[
   |N_k| < \eta \, k^\frac32 \left( \frac{k}{e \pi} \right)^{k-1}
     \quad \text{with} \quad
     \eta = \frac{11}{15} \frac{\pi}{e} \sqrt{2\pi} \approx 2.12 .
\]
Further we conclude that $\log |N_k| < k \log (k / \pi)$.
Finally, we achieve that
\begin{equation} \label{eq:loc-estim-Nk-log-r}
   |N_k| \log^r |N_k| <  f_r(k) \left( \frac{k}{e \pi} \right)^{k-1}
\end{equation}
with
\[
   f_r(k) = \eta \, k^{\frac32+r} \log^r (k / \pi).
\]
For a fixed $r$ we have $\sqrt[k-1]{f_r(k)} \to 1$ as $k \to \infty$. Define
\[
    I(r) = \min \left\{ n \geq 10 : \sqrt[k-1]{f_r(k)} < e \pi
      \text{ for all } k \geq n \right\},
\]
which is an increasing function depending on $r$.
A short computation shows that $I(r)=10$ for $r=1,\ldots,6$. We set $C_r=I(r)$.
Consequently \eqref{eq:loc-estim-Nk-log-r} turns into
\begin{equation} \label{eq:loc-estim-sqrt-Nk-log}
   \sqrt[k-1]{|N_k| \log^r |N_k|} < k \quad \text{for } k \geq C_r.
\end{equation}
Now, we assume that \eqref{eq:em} has no solution for $k < C_r$
and that
\begin{equation} \label{eq:loc-estim-gk-log}
   \max_{m \, \geq \, 1} \, g_k(m) < |N_k| \log^r |N_k|
     \quad \text{for } k \geq C_r.
\end{equation}
According to \eqref{eq:loc-mk-gcd}, \eqref{eq:loc-estim-sqrt-Nk-log},
and \eqref{eq:loc-estim-gk-log}, we then achieve that $m < k$ for $k \geq C_r$,
which contradicts \eqref{eq:estim-k-m}. Thus there is no solution of
\eqref{eq:em} for all $k \geq 2$ implying Conjecture~\ref{conj:em}.
\end{proof}

To prove Theorem \ref{thm:Sk-gcd}, we shall need some preparations and a
refinement of \eqref{eq:m2-div-Bk}.

\begin{theorem} \label{thm:congr-sk-bk}
Let $k, m$ be positive integers where $k$ is even and $m \geq 2$. Then
\begin{alignat*}{2}
   S_k(m) &\equiv B_k \, m \pmod{m},
     \quad &&\text{if } k \geq 2, \\
   S_k(m) &\equiv B_k \, m \pmod{m^2},
     \quad &&\text{if } k \geq 4 \text{ and } (D_k,m)=1, \\
   S_k(m) &\equiv B_k \, m \pmod{m^3},
     \quad &&\text{if } k \geq 6 \text{ and } m \pdiv N_k.
\end{alignat*}
More precisely for $p^r \pdiveq m$:
\begin{alignat*}{2}
   S_k(m) &\equiv B_k \, m \pmod{p^{2r}},
     \quad &&\text{if } k \geq 4 \text{ and } p \notdiv D_k, \\
   S_k(m) &\equiv B_k \, m \pmod{p^{3r}},
     \quad &&\text{if } k \geq 6 \text{ and } p \pdiv N_k.
\end{alignat*}
\end{theorem}

\begin{proof}
This follows by exploiting the proof of
\cite[Prop.~8.5, pp.~436-437]{Kellner:2007}.
\end{proof}

\begin{lemma} \label{lem:seq-gcd}
Let $a, b$ be positive integers. The sequence $\{(a,b^\nu)\}_{\nu \geq 1}$
is increasing and eventually constant. If $(a,b^r)=(a,b^{r+1})$ for some
$r \geq 1$, then $\{(a,b^\nu)\}_{\nu \geq r}$ is constant. Especially if
$\ord_p a \leq s \ord_p b$, then $\ord_p \, (a,b^\nu) = \ord_p a$
for $\nu \geq s$.
\end{lemma}

\begin{proof}
If $(a,b)=1$, then $(a,b^\nu)=1$ for $\nu \geq 1$.
Assume that $(a,b) > 1$. For each $p \pdiv (a,b)$,
we have $\ord_p \, (a,b^\nu) = \min \{ \ord_p a, \nu \ord_p b\}$,
which is increasing and bounded as $\nu \to \infty$.
It follows that if $\ord_p a \leq s \ord_p b$, then
$\ord_p \, (a,b^\nu) = \ord_p a$ for $\nu \geq s$.
Considering all primes $p \pdiv (a,b)$, we deduce that
$(a,b^r)=(a,b^{r+1})$ for some $r \geq 1$ implies that
$(a,b^\nu)$ is constant for $\nu \geq r$.
\end{proof}

\begin{prop} \label{prop:gcd-Sk-m}
Let $k, m$ be positive integers with even $k$. Then
\[
   (S_k(m),m) = \frac{m}{(D_k,m)} \qquad \text{and} \qquad
     \min_{m \, \geq \, 1} \, g_k(m) = \frac{1}{D_k}.
\]
\end{prop}

\begin{proof}
Let $m > 1$, since the case $m = 1$ is trivial.
By Theorem~\ref{thm:congr-sk-bk} we have
\[
   S_k(m) \equiv \frac{N_k}{D_k} \, m \pmod{m}.
\]
For each prime power $p^{e_p} \pdiveq m$, we then infer that
$p^{e_p} \pdiv S_k(m)$, if $p \notdiv D_k$; otherwise
$p^{e_p-1} \pdiveq S_k(m)$, since $D_k$ is square-free due
to \eqref{eq:Bk-denom}. This gives the first equation above.
Using Lemma \ref{lem:seq-gcd} and \eqref{eq:loc-gcd-Sk},
we deduce the relation
\[
   g_k(m) = \frac{(S_k(m),m^k)}{m} \geq
     \frac{(S_k(m),m)}{m} = \frac{1}{(D_k,m)}.
\]
If $m=D_k$, then we even have that $(S_k(m),m^\nu) = 1$ for
$\nu \geq 1$, giving the minimum with $g_k(m) = 1 / D_k$.
\end{proof}

\begin{prop} \label{prop:gcd-Sk-m2}
Let $k, m$ be positive integers with even $k$. Then
\[
   \frac{(S_k(m),m^2)}{m} = \frac{(N_k,m)}{(D_k,m)}.
\]
\end{prop}

\begin{proof}
The case $k=2$ follows by \eqref{eq:Sk-sum}, $B_2=\frac16$,
and $((m-1)(2m-1),m)=1$. Now let $k \geq 4$, $m \geq 2$,
and assume that $(D_k,m)=1$.
Applying Theorem~\ref{thm:congr-sk-bk} for this case we then have
\begin{equation} \label{eq:loc-Sk-frac-m2}
   S_k(m) \equiv \frac{N_k}{D_k} \, m \pmod{m^2}.
\end{equation}
Thus we deduce that $(S_k(m),m^2) = m \, (N_k,m)$.
Now let $m$ be arbitrary. Using Proposition~\ref{prop:gcd-Sk-m}
we obtain the relation
\[
   (S_k(m),m^2) = c_{k,m} (S_k(m),m) = c_{k,m} \frac{m}{(D_k,m)}
\]
with some integer $c_{k,m} \geq 1$. Since $(N_k,D_k)=1$, those factors of
$(N_k,m)$ can only give a contribution to the factor $c_{k,m}$; while other
factors of $m$ are reduced by $(D_k,m)$.
To be more precise, consider a prime $p$ where $p^r \pdiveq m$:
If $p \pdiv D_k$, then $\ord_p \, (S_k(m),m^\nu) = r-1$
for $\nu \geq 1$ by Proposition \ref{prop:gcd-Sk-m} and Lemma \ref{lem:seq-gcd}.
Otherwise $p \notdiv D_k$ and \eqref{eq:loc-Sk-frac-m2} remains
valid$\pmod{p^{2r}}$ by Theorem~\ref{thm:congr-sk-bk}.
Hence $c_{k,m} = (N_k,m)$, which yields the result.
\end{proof}

\begin{prop} \label{prop:gcd-Sk-m3}
Let $k, m$ be positive integers with even $k$. Then
\[
   \frac{(S_k(m),m^3)}{m} = \frac{(N_k,m^2)}{(D_k,m)}.
\]
\end{prop}

\begin{proof}
The cases $k=2,4,6,8$ are compatible with Proposition~\ref{prop:gcd-Sk-m2},
since $|N_k| = 1$. Now let $k \geq 10$, $m \geq 2$,
and assume that $m \pdiv N_k$.
Using Theorem~\ref{thm:congr-sk-bk} we have for this case that
\begin{equation} \label{eq:loc-Sk-frac-m3}
   S_k(m) \equiv \frac{N_k}{D_k} \, m \pmod{m^3}.
\end{equation}
This shows that $(S_k(m),m^3) = m \, (N_k,m^2)$.
Now let $m$ be arbitrary. With Proposition~\ref{prop:gcd-Sk-m2}
we obtain the relation
\[
   (S_k(m),m^3) = d_{k,m} \, (S_k(m),m^2)
     = d_{k,m} \, m \frac{(N_k,m)}{(D_k,m)}
\]
with some integer $d_{k,m} \geq 1$.
Consider a prime $p$ where $p^r \pdiveq m$:
If $p \notdiv N_k$, then
\[
   \ord_p \, ( S_k(m), m^\nu ) \leq r, \quad \nu \geq 1,
\]
using Propositions \ref{prop:gcd-Sk-m} and \ref{prop:gcd-Sk-m2}
and Lemma~\ref{lem:seq-gcd}. Thus $p$ gives no contribution to $d_{k,m}$.
If $p \pdiv N_k$, then \eqref{eq:loc-Sk-frac-m2} and \eqref{eq:loc-Sk-frac-m3}
remain valid$\pmod{p^{2r}}$ and$\pmod{p^{3r}}$ by Theorem~\ref{thm:congr-sk-bk},
respectively. So a power of $p$ gives a contribution to
$d_{k,m}$. Counting the prime powers, which fulfill both
\eqref{eq:loc-Sk-frac-m2} and \eqref{eq:loc-Sk-frac-m3},
we then finally deduce that $d_{k,m} = (N_k,m^2)/(N_k,m)$.
\end{proof}

\begin{corl} \label{corl:gcd-Sk-mk}
Let $k, m$ be positive integers with even $k$. Then
\[
   (S_k(m),m^k) = e_{k,m} (S_k(m),m^3),
\]
where $e_{k,m}$ is a positive integer with the property
that $p \pdiv e_{k,m}$ implies that $p \pdiv N_k$.
\end{corl}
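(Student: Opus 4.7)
The plan is to pass to $p$-adic valuations and verify, for each prime $p$ with $p \notdiv N_k$, the equality $\ord_p \gcd(S_k(m),m^k) = \ord_p \gcd(S_k(m),m^3)$. This is equivalent to $p \notdiv e_{k,m}$ for every such $p$, which is the contrapositive of the desired property. That $e_{k,m}$ is a well-defined positive integer is automatic for $k \geq 3$, since $m^3 \pdiv m^k$ forces $\gcd(S_k(m),m^3) \pdiv \gcd(S_k(m),m^k)$; for $k=2$ the identity $|N_2|=1$ makes both Propositions \ref{prop:gcd-Sk-m2} and \ref{prop:gcd-Sk-m3} collapse to $m/\gcd(D_2,m)$, whence $e_{2,m}=1$.

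Next, I would fix a prime $p$ with $p \notdiv N_k$, set $p^r \pdiveq m$, and split on whether $p$ divides $D_k$. The case $r=0$ is trivial. If $r \geq 1$ and $p \pdiv D_k$, Proposition \ref{prop:gcd-Sk-m} together with the squarefreeness of $D_k$ gives $\ord_p S_k(m) = r-1$, which is strictly less than $jr$ for every $j \geq 1$; consequently $\ord_p \gcd(S_k(m),m^j) = r-1$, independently of $j$. If $r \geq 1$ and $p \notdiv D_k$, then $p$ divides neither $N_k$ nor $D_k$, and Proposition \ref{prop:gcd-Sk-m2} yields $\ord_p \gcd(S_k(m),m^2) = r$, which pins down $\ord_p S_k(m) = r$ via $\min(\ord_p S_k(m), 2r) = r$; hence $\ord_p \gcd(S_k(m),m^j) = r$ for every $j \geq 1$. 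In both subcases the values at $j=3$ and $j=k$ coincide, as required.

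The main obstacle is really the uniform observation that $\ord_p S_k(m) \leq r$ whenever $p \notdiv N_k$, so that $\ord_p \gcd(S_k(m),m^j)$ stabilizes for $j \geq 1$; this bound is supplied directly by Propositions \ref{prop:gcd-Sk-m} and \ref{prop:gcd-Sk-m2} in the two subcases above. Primes dividing $N_k$ are precisely those that can produce a genuine gap between $\gcd(S_k(m),m^3)$ and $\gcd(S_k(m),m^k)$, because the stronger congruence $S_k(m) \equiv B_k m \pmod{p^{3r}}$ of Corollary \ref{corl:m2-div-Bk} permits $\ord_p S_k(m)$ to exceed $3r$ in that regime.
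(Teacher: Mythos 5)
Your proposal is correct and takes essentially the same approach as the paper, which simply reuses the first-case argument from the proof of Proposition~\ref{prop:gcd-Sk-m3}: for a prime $p \notdiv N_k$ with $p^r \pdiveq m$ one has $\ord_p S_k(m) \leq r$, so $\ord_p \gcd(S_k(m),m^j)$ stabilizes already by $j=3$ and such primes contribute nothing to $e_{k,m}$. Your write-up merely makes the valuations explicit (the values $r-1$ and $r$ via Propositions~\ref{prop:gcd-Sk-m} and~\ref{prop:gcd-Sk-m2}) and adds the harmless $k=2$ check that the paper's terse proof leaves implicit.
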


\begin{proof}
As in the proof of Proposition~\ref{prop:gcd-Sk-m3}, we can use the same
arguments. A prime $p$ with $p \notdiv N_k$ cannot give a contribution
to $e_{k,m}$ anymore.
\end{proof}

\begin{proof}[Proof of Theorem \ref{thm:Sk-gcd}]
The minimum of $g_k$ is shown by Proposition~\ref{prop:gcd-Sk-m}.
As a consequence of Proposition~\ref{prop:gcd-Sk-m3} and
Corollary~\ref{corl:gcd-Sk-mk}, it follows for arbitrary $m$ that
$g_k(m)=1$ if and only if $(D_k N_k,m)=1$.
Combining Propositions \ref{prop:gcd-Sk-m} -- \ref{prop:gcd-Sk-m3}
we have achieved that
\begin{equation} \label{eq:loc-Sk-mu}
   (S_k(m),m^\nu) = m \frac{(N_k,m^{\nu-1})}{(D_k,m)}, \quad \nu = 1,2,3.
\end{equation}
The values of $g_k(m)$ for $m=D_k, |N_k|, D_k |N_k|$ follow easily
by \eqref{eq:loc-Sk-mu} using Lemma~\ref{lem:seq-gcd},
since $(S_k(m),m^\nu)$ is constant for $\nu \geq 2$ in these cases.
If $(D_k,m)=1$ and $|N_k| \pdiv m$, then $g_k(m) = |N_k|$ by the same
arguments, which implies that
\begin{equation} \label{eq:loc-max-gk-3}
   \max_{m \, \geq \, 1} \, g_k(m) \geq |N_k|.
\end{equation}
It remains the case where $N_k$ is square-free. By \eqref{eq:loc-Sk-mu}
and Lemma~\ref{lem:seq-gcd} we conclude that $(S_k(m),m^\nu)$ is
constant for $\nu \geq 2$ for arbitrary $m$. Thus $g_k(m)=(N_k,m)/(D_k,m)$
in this case. Consequently \eqref{eq:loc-max-gk-3} holds with equality.
\end{proof}

\section*{Acknowledgement}
The author wishes to thank both the Max Planck Institute for Mathematics at
Bonn for an invitation for a talk in February 2010 and especially Pieter Moree
for the organization and discussions on the Erd\H{o}s-Moser equation.

\bibliographystyle{amsplain}

\end{document}